\documentclass[12pt]{amsart}
\usepackage{amssymb,latexsym}
\input epsf
\usepackage{graphicx}
\parindent 0cm
\newtheorem{theorem}{Theorem}[section]
\newtheorem{proposition}[theorem]{Proposition}
\newtheorem{corollary}[theorem]{Corollary}
\newtheorem{definition}[theorem]{Definition}

\newtheorem{lemma}[theorem]{Lemma}
\newtheorem{example}[theorem]{Example}

\newtheorem{note}[theorem]{Note}

\begin{document}
\title[Homology of non-crossing
partition lattices] {Geometrically constructed bases for homology
of non-crossing partition lattices}
\author[Kenny]{Aisling~Kenny}
\address{School of Mathematical Sciences\\
Dublin City University\\
Glasnevin, Dublin 9\\
Ireland} \email{aisling.kenny9@mail.dcu.ie}

\begin{abstract}
For any finite, real reflection group $W$, we construct a
geometric basis for the homology of the corresponding non-crossing
partition lattice. We relate this to the basis for the homology of
the corresponding intersection lattice introduced by Bj\"{o}rner
and Wachs in \cite{BW} using a general construction of a generic
affine hyperplane for the central hyperplane arrangement defined
by $W$.
\end{abstract}

\maketitle

\section{Introduction}

Let $W$ be a finite, real reflection group acting effectively on
$\mathbf{R}^n$. In \cite{BW} Bj\"{o}rner and Wachs construct a
geometric basis for the homology of the intersection lattice
associated to $W$. There is another lattice associated to $W$
called the non-crossing partition lattice. In \cite{Shell},
Athanasiadis, Brady and Watt prove that the non-crossing partition
lattice is shellable for any finite Coxeter group $W$. Zoque
constructs a basis for the top homology of the non-crossing
partition lattice for the $A_n$ case in \cite{Zoque} where the
basis elements are in bijection with binary trees.\\

A geometric model $X(c)$ of the non-crossing partition lattice is
constructed in \cite{Lattices}. In this paper, we use $X(c)$ to
construct a geometric basis for the homology of the non-crossing
partition lattice that corresponds to $W$. We construct the basis
by defining a homotopy equivalence between the proper part of the
non-crossing partition lattice and the $(n-2)$-skeleton of $X(c)$.
We exhibit an explicit embedding of the homology of the
non-crossing partition lattice in the homology of the intersection
lattice, using the general construction of a generic affine
hyperplane $H_\mathbf{v}$.

\section{Preliminaries}

We refer the reader to \cite{Bourbaki} and \cite{Humphreys} for
standard facts and notation about finite reflection groups. As in
\cite{Lattices} we fix a fundamental chamber $C$ for the
$W$-action with inward unit normals $\alpha_1,\dots,\alpha_n$ and
let $r_1,\dots,r_n$ be the corresponding reflections. We order the
inward normals so that for some $s$ with $1\leq s\leq n$, the sets
$\{\alpha_1,\dots,\alpha_s\}$ and
$\{\alpha_{s+1},\dots,\alpha_n\}$ are orthonormal. We fix a
Coxeter element $c$ for $W$ where $c=r_1r_2\dots r_n$. As in
\cite{Lattices} we define a total order on roots by
$\rho_i=r_1\dots r_{i-1}\alpha_i$ where the $\alpha$'s and $r$'s
are defined cyclically mod $n$. The positive roots relative to the
fundamental chamber are $\{\rho_1,\rho_2,\dots,\rho_{nh/2}\}$
where $h$ is the order of $c$ in $W$ \cite{Stein}. Let $T$ denote
the reflection set of $W$. This consists of the set of reflections
$r(\rho_i)$ where $\rho_i$ is a positive root and $r(\rho_i)$ is
the reflection in the hyperplane orthogonal to $\rho_i$. For $w\in
W$, let $\ell(w)$ denote the smallest $k$ such that $w$ can be
written as a product of $k$ reflections from $T$. The partial
order $\preceq$ on $W$ is defined by declaring for $u,w\in W$:
\begin{eqnarray} \label{order}
  u\preceq w &\Leftrightarrow& \ell(w)=\ell(u) + \ell(u^{-1}w).
\end{eqnarray}
The subposet of elements of $W$ that precede $c$ in the partial
order \eqref{order} is denoted $NCP_c$. The subposet $NCP_c$ forms
a lattice (by \cite{Lattices} for example),
and is called the non-crossing partition lattice.\\

We now review the definition of the geometric model $X(c)$ of
$NCP_c$ constructed in \cite{Lattices}. The spherical simplicial
complex $X(c)$ has as vertex set the set of positive roots
$\{\rho_1,\rho_2,\dots,\rho_{nh/2}\}$. An edge joins $\rho_i$ to
$\rho_j$ if $i<j$ and $r(\rho_j)r(\rho_i)$ is a length $2$ element
preceding $c$. The vertices $\langle \rho_{i_1}, \dots,
\rho_{i_{k}}\rangle$ form a $(k-1)$-simplex if they are pairwise
joined by edges. For each $w \preceq c$, $X(w)$ is defined to be
the subcomplex of $X(c)$ consisting of those simplices whose
vertices have the property that the corresponding
reflections precede $w$.\\

Finally, we recall some notation and standard facts about posets
(\cite{Tools}, \cite{Quillen}). Let $P$ denote a bounded poset
with minimal element $\hat{0}$ and maximal element $\hat{1}$. The
proper part of the poset $P$ is denoted by $\bar{P}$ and defined
to be $\bar{P}=P\setminus \{\hat{0}, \hat{1}\}$. Let $|P|$ denote
the simplicial complex associated to $P$, that is the simplicial
complex whose vertices are the elements of the poset $P$ and whose
simplices are the non-empty finite chains in $P$. We say that the
poset $P$ is contractible if the simplicial complex $|P|$ is
contractible. For $\Delta$ a simplicial complex, let $P(\Delta)$
denote the poset of simplices in $\Delta$ ordered by inclusion.
The barycentric subdivision of the simplicial complex $\Delta$ is
the simplicial complex $|P(\Delta)|$ and is denoted $sd(\Delta)$.

\section{Homotopy Equivalence}\label{homotopy}

We begin with the observation that every simplex in $X(c)$ defines
a non-crossing partition. Recall from Lemma $4.8$ of
\cite{Lattices} that if $\{\tau_1,\dots,\tau_k\}$ is the ordered
vertex set of a simplex $\sigma$ of $X(c)$ then
\[\ell(r(\tau_1)\dots r(\tau_k)c)=n-k.\]
In particular, $r(\tau_k)\dots r(\tau_1)$ is a non-crossing
partition of length $k$.

\begin{definition} We define $f:P(X(c))\rightarrow
NCP_c$ by
\[f(\sigma)=r(\tau_k)\dots r(\tau_1)\] where $\sigma$
is the simplex of $X(c)$ with ordered vertex set
$\{\tau_1,\dots,\tau_k\}$.
\end{definition}

\begin{lemma}\label{lemma}
The map $f$ is a poset map.
\end{lemma}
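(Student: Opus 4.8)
The statement to prove is that $f$ is order-preserving: whenever $\sigma$ is a face of $\sigma'$ in $X(c)$ (so $\sigma\le\sigma'$ in $P(X(c))$), we should have $f(\sigma)\preceq f(\sigma')$. The plan is to reduce first to covering relations. Since $\preceq$ is transitive and any inclusion $\sigma\subseteq\sigma'$ refines to a chain $\sigma=\sigma_0\subset\sigma_1\subset\cdots\subset\sigma_r=\sigma'$ in which each $\sigma_{i+1}$ has exactly one more vertex than $\sigma_i$ (and each $\sigma_i$, being a face of $\sigma'$, is a simplex of $X(c)$), it is enough to handle the case in which $\sigma'$ has ordered vertex set $\{\tau_1,\dots,\tau_m\}$ and $\sigma$ is obtained by deleting a single vertex $\tau_j$, so that $\sigma$ has ordered vertex set $\{\tau_1,\dots,\widehat{\tau_j},\dots,\tau_m\}$ with the induced order.

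In that case $f(\sigma')=r(\tau_m)\cdots r(\tau_1)$ and $f(\sigma)=r(\tau_m)\cdots\widehat{r(\tau_j)}\cdots r(\tau_1)$. By the consequence of Lemma~$4.8$ of \cite{Lattices} recalled above, $\ell(f(\sigma'))=m$ and $\ell(f(\sigma))=m-1$. The heart of the argument is then a direct computation of $f(\sigma)^{-1}f(\sigma')$: writing $A=r(\tau_{j-1})\cdots r(\tau_1)$, one has $f(\sigma)^{-1}=r(\tau_1)\cdots r(\tau_{j-1})\,r(\tau_{j+1})\cdots r(\tau_m)$, and upon multiplying by $f(\sigma')=r(\tau_m)\cdots r(\tau_{j+1})\,r(\tau_j)\,r(\tau_{j-1})\cdots r(\tau_1)$ the block $r(\tau_{j+1})\cdots r(\tau_m)$ telescopes against $r(\tau_m)\cdots r(\tau_{j+1})$, leaving
\[
f(\sigma)^{-1}f(\sigma') = A^{-1}\,r(\tau_j)\,A .
\]
This is a conjugate of the reflection $r(\tau_j)$, hence an element of $T$, and it is not the identity, so $\ell(f(\sigma)^{-1}f(\sigma'))=1$. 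Therefore $\ell(f(\sigma'))=m=(m-1)+1=\ell(f(\sigma))+\ell(f(\sigma)^{-1}f(\sigma'))$, which by \eqref{order} says precisely that $f(\sigma)\preceq f(\sigma')$, completing the reduced case and hence the lemma.

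I do not expect any serious obstacle: the proof is essentially bookkeeping. The one place that needs care is keeping track of the reversed order of the reflections when $f$ is restricted to a face, so that the telescoping cancellation is visible, together with invoking Lemma~$4.8$ for both $\sigma$ and $\sigma'$ to pin down the two lengths; everything else is formal, using only the definition of $\preceq$ and the fact that $T$ is closed under conjugation.
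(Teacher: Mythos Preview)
Your proof is correct and follows essentially the same approach as the paper's: both verify $f(\theta)\preceq f(\sigma)$ by exhibiting $f(\theta)^{-1}f(\sigma)$ as a product of the right number of reflections via the conjugation identity $r(\rho)r(\tau)=r(\tau)r(\rho')$, together with the length information from Lemma~4.8 of \cite{Lattices}. The only organisational difference is that you first reduce to codimension-one faces and then telescope explicitly, whereas the paper treats an arbitrary face $\theta\subseteq\sigma$ in one step by shuffling the reflections of $f(\theta)$ to the front of the word for $f(\sigma)$; the underlying idea is the same.
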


\begin{proof}
Let $\sigma=\{\tau_1,\dots,\tau_k\} \in P(X(c))$ and let
$\theta \preceq \sigma$. Therefore,
$\theta=\{\tau_{i_1},\dots,\tau_{i_l}\}$ for some $1\leq
i_1<\dots< i_l\leq k$. Note that for any roots $\rho$ and $\tau$,
we have $r(\rho)r(\tau)=r(\tau)r(\rho')$, where
$\rho'=r(\tau)[\rho]$. We can use this equality to conjugate the
reflections in $f(\theta)$ to the beginning of the expression for
$f(\sigma)$. Therefore $f(\theta) = r(\tau_{i_l})\dots
r(\tau_{i_1}) \preceq r(\tau_{k})\dots r(\tau_{1})=f(\sigma)$.
\end{proof}

By definition of $f$, $f^{-1}(c)$ is the set of maximal elements
in $P(X(c))$ and $f^{-1}(e)$ is empty. We therefore can consider
the induced map,
\[\hat{f}:\hat{P}(X(c))\rightarrow \overline{NCP_c}\]
where $\hat{P}(X(c))$ is the poset obtained from $P(X(c))$ by
removing the maximal elements. Note that $\hat{P}(X(c))$ is the
poset of simplices of the $(n-2)$-skeleton of $X(c)$.\\

The following result was proved by Athanasiadis and Tzanaki in
Theorem $4.2$ of \cite{Christos} in the more general setting of
generalised cluster complexes and generalised non-crossing
partitions. However, we include the proof of the specific case
here.

\begin{theorem} \label{thrm}The map $\hat{f}$ is a homotopy equivalence.
\end{theorem}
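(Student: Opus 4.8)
The plan is to derive Theorem~\ref{thrm} from Quillen's fiber lemma \cite{Quillen}: if $g\colon P\to Q$ is a poset map such that the subposet $g^{-1}\big(Q_{\preceq q}\big)$ has contractible order complex for every $q\in Q$, then $g$ induces a homotopy equivalence $|P|\simeq|Q|$. Here we take $g=\hat f$, $P=\hat P(X(c))$ and $Q=\overline{NCP_c}$; that $\hat f$ is a poset map is Lemma~\ref{lemma} together with the remarks preceding the statement (that $f^{-1}(c)$ is the set of maximal simplices and $f^{-1}(e)=\varnothing$). Fixing $w\in\overline{NCP_c}$, so that $e\prec w\prec c$, the relevant fiber is
\[
\hat f^{-1}\big((\overline{NCP_c})_{\preceq w}\big)=\{\sigma\in\hat P(X(c)):f(\sigma)\preceq w\},
\]
where the conditions $f(\sigma)\neq e$ and $f(\sigma)\neq c$ hold automatically, the first because $\sigma$ is a nonempty simplex and the second because $w\prec c$.

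The key step is to identify this fiber with the face poset $P(X(w))$ of the subcomplex $X(w)$. If $\sigma=\{\tau_1,\dots,\tau_k\}$ is a simplex of $X(c)$ with $f(\sigma)\preceq w$, then Lemma~$4.8$ of \cite{Lattices} gives $\ell(f(\sigma))=k$, so $r(\tau_k)\cdots r(\tau_1)$ is a reduced expression; hence each $r(\tau_i)$ precedes $f(\sigma)$, and therefore precedes $w$, so $\sigma$ is a simplex of $X(w)$ --- necessarily of dimension $k-1\le\ell(w)-1\le n-2$, hence already lying in the $(n-2)$-skeleton. Conversely, one uses the fact from \cite{Lattices} that $X(w)$ is the geometric model of $NCP_w=[e,w]$ and that $f$ restricts on its face poset to the analogous map $P(X(w))\to NCP_w$; this yields $f(\sigma)\preceq w$ for every simplex $\sigma$ of $X(w)$. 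Since $X(w)$ is a subcomplex of $X(c)$, the fiber is exactly $P(X(w))$ with the inclusion order, whose order complex is $sd(X(w))$ and hence homeomorphic to $|X(w)|$.

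It then remains to check that $|X(w)|$ is contractible for every $w$ with $e\prec w\prec c$; for $\ell(w)=1$ this is a single point, and in general one invokes the contractibility of the geometric model $X(w)$ coming from \cite{Lattices}. Granting this, every lower fiber of $\hat f$ is contractible and Quillen's fiber lemma gives that $\hat f$ is a homotopy equivalence. I expect the main obstacle to be precisely these two structural inputs: the converse half of the fiber identification (which amounts to knowing that $X(w)$ genuinely realizes the interval $[e,w]$, equivalently that a reduced product of reflections each preceding both $w$ and $c$ must itself precede $w$) and the contractibility of $X(w)$ --- both properties of the family of complexes $X(-)$ rather than features special to this particular skeleton.
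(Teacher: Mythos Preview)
Your proposal is correct and matches the paper's proof essentially line for line: apply Quillen's fibre lemma, show $\hat f_{\preceq w}=P(X(w))$, and invoke contractibility of $X(w)$ from \cite{Lattices}. The two ``obstacles'' you flag are precisely the external inputs the paper cites --- equation~3.4 of \cite{Lattices} for the converse inclusion (reflections each preceding $w$ and $c$ have product preceding $w$) and Corollary~7.7 of \cite{Lattices} for contractibility of $X(w)$ --- so there is no gap.
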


\begin{proof}
Since $f$ is a poset map by lemma \ref{lemma},
$\hat{f}:\hat{P}(X(c))\rightarrow \overline{NCP_c}$ is a poset
map. We intend to apply Quillen's Fibre Lemma \cite{Quillen} to
this map $\hat{f}$. Following the notation of \cite{Quillen}, we
define the subposet $\hat{f}_{\preceq w}$ of $\hat{P}(X(c))$ for
$w \in \overline{NCP_c}$ by
\[\hat{f}_{\preceq w}=\{\sigma \in \hat{P}(X(c)) : \hat{f}(\sigma)\preceq w\}.\]

We claim that $\hat{f}_{\preceq w}=P(X(w))$. Assuming the claim,
the theorem follows from Proposition $1.6$ of \cite{Quillen} if
$|P(X(w))|$ is contractible. It is shown in Corollary $7.7$ of
\cite{Lattices} that $X(w)$ is contractible for all $w \in NCP_c$.
Since $X(w)$ and $sd(X(w))$ are homeomorphic (by \cite{Quillen}
for example) and
$|P(X(w))|=sd(X(w))$, it follows that $|P(X(w))|$ is contractible.\\

To prove the claim we first show that $\hat{f}_{\preceq
w}\subseteq P(X(w))$. If $\sigma \in \hat{f}_{\preceq w}$, then
$e\prec \hat{f}(\sigma)\preceq w \prec c$ by definition of
$\hat{f}_{\preceq w}$. By applying lemma \ref{lemma} to the
reflections corresponding to vertices of $\sigma$, it follows that
$\sigma \in P(X(w))$. To show that $P(X(w))\subseteq
\hat{f}_{\preceq w}$, let $\sigma\in P(X(w))$. If $\sigma$ has
ordered vertex set $\{\tau_1,\dots,\tau_k\}$, then
$r(\tau_i)\preceq w$ for each $i$ by definition of $X(w)$. Then
$\hat{f}(\sigma)=r(\tau_k)\dots r(\tau_1)\preceq c$. By equation
$3.4$ of \cite{Lattices}, we know that since
$\hat{f}(\sigma)\preceq c$, $w\preceq c$ and each
$r(\tau_i)\preceq w$ then $\hat{f}(\sigma)=r(\tau_k)\dots
r(\tau_1)\preceq w$. Therefore, $\sigma \in \hat{f}_{\preceq w}$.
\end{proof}

\begin{corollary}
$|\overline{NCP_c}|$ has the homotopy type of a wedge of spheres,
one for each facet of $X(c)$.
\end{corollary}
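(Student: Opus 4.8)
The plan is to combine Theorem~\ref{thrm} with the known structure of $X(c)$ as a simplicial complex. By Theorem~\ref{thrm}, the space $|\overline{NCP_c}|$ is homotopy equivalent to $|\hat{P}(X(c))|$, which is the barycentric subdivision $sd$ of the $(n-2)$-skeleton of $X(c)$; in particular it is homotopy equivalent to the $(n-2)$-skeleton of $X(c)$ itself. So it suffices to show that this skeleton has the homotopy type of a wedge of $(n-2)$-spheres, with one sphere for each facet (top-dimensional, i.e.\ $(n-1)$-simplex) of $X(c)$.

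First I would recall that $X(c)$ is itself contractible: this is the $w=c$ case of Corollary~$7.7$ of~\cite{Lattices}, already quoted in the proof of Theorem~\ref{thrm}. Moreover $X(c)$ is a pure $(n-1)$-dimensional complex, since each facet corresponds to a maximal chain $e \prec r(\tau_1) \prec \cdots \prec c$ in $NCP_c$ and $\ell(c)=n$, so every maximal simplex has exactly $n$ vertices. Now the standard fact about skeleta enters: if $\Delta$ is a contractible (or even just suitably connected) $d$-dimensional complex, then its $(d-1)$-skeleton $\Delta^{(d-1)}$ is homotopy equivalent to a wedge of $(d-1)$-spheres, one for each $d$-cell of $\Delta$. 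The quickest way to see this is via the long exact sequence of the pair $(\Delta,\Delta^{(d-1)})$: since $\Delta$ is obtained from $\Delta^{(d-1)}$ by attaching the top cells, $H_i(\Delta,\Delta^{(d-1)})$ is free on the $d$-cells in degree $d$ and zero otherwise, and contractibility of $\Delta$ gives $\widetilde{H}_i(\Delta^{(d-1)}) \cong H_{i+1}(\Delta,\Delta^{(d-1)})$, which is free on the $d$-cells when $i=d-1$ and vanishes otherwise. Since $X(c)$ is also simply connected in the relevant range (its $(n-2)$-skeleton agrees with that of a contractible complex, so $\pi_1$ agrees when $n\geq 3$, and the low-dimensional cases are immediate), the skeleton is a simply connected space with free homology concentrated in one degree, hence a wedge of spheres.

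Putting this together with $d=n-1$: the $(n-2)$-skeleton of $X(c)$ is homotopy equivalent to a wedge of $(n-2)$-spheres indexed by the facets of $X(c)$, and by Theorem~\ref{thrm} the same holds for $|\overline{NCP_c}|$. The main obstacle, such as it is, is being careful about the low-dimensional edge cases (when $n=1$ or $n=2$ the relevant skeleton may be empty or zero-dimensional, and "wedge of spheres" must be read with the usual conventions, where an empty wedge is a point), and making sure the homotopy-equivalence of Theorem~\ref{thrm} is correctly transported through the identification $|\hat P(X(c))| = sd\bigl(X(c)^{(n-2)}\bigr) \simeq X(c)^{(n-2)}$. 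Neither of these is a serious difficulty; the substantive content is already in Theorem~\ref{thrm} and in the contractibility result from~\cite{Lattices}.
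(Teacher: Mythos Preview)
Your proof is correct and follows essentially the same route as the paper: invoke Theorem~\ref{thrm} to pass to the $(n-2)$-skeleton of $X(c)$, then use contractibility of $X(c)$ (from \cite{Lattices}) to conclude that this skeleton is a wedge of $(n-2)$-spheres indexed by the facets. The only difference is cosmetic---the paper phrases the last step geometrically (puncture each facet of the convex spherical complex $X(c)$ and deformation-retract onto the skeleton), whereas you run the long exact sequence of the pair $(X(c),X(c)^{(n-2)})$ and then invoke uniqueness of Moore spaces; both are standard and yield the same conclusion.
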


\begin{proof}
The map $\hat{f}$ induces a homotopy equivalence
$|\hat{f}|:|\hat{P}(X(c))|\rightarrow |\overline{NCP_c}|$. The
simplicial complex $X(c)$ is a spherical complex that is convex
and contractible (Theorem $7.6$ of \cite{Lattices}). Let $Y$
denote the subspace of $X(c)$ obtained by removing a point from
the interior of each facet. Then $|\hat{P}(X(c))|$ is a
deformation retract of $Y$ and therefore has the homotopy type of
a wedge of $(n-2)$ spheres. The number of such spheres is equal to
the number of facets of $X(c)$.
\end{proof}

\begin{note} This is a more direct proof of the result in
corollary \mbox{4.4} of \cite{Shell} where it is proved that for a
crystallographic root system, the M\"{o}bius number of $NCP_c$ is
equal to $(-1)^n$ times the number of maximal simplices of
$|NCP_c|$, which can also be viewed as positive clusters
corresponding to the root system.\end{note}

\section{Homology Embedding}\label{homology}

We now briefly review the results in \cite{BW} where geometric
bases for the homology of intersection lattices are constructed.
Let $\mathcal{A}$ be a central and essential hyperplane
arrangement in $\mathbf{R}^n$. We refer to the connected
components of $\mathbf{R}^n\setminus\mathcal{A}$ as regions. We
let $L_{\mathcal{A}}$ denote the set of intersections of
subfamilies of $\mathcal{A}$, partially ordered by reverse
inclusion. We refer to $L_{\mathcal{A}}$ as the intersection
lattice of $\mathcal{A}$.\\

Homology generators are found by using a non-zero vector
$\mathbf{v}$ such that the hyperplane $H_\mathbf{v}$, which is
through $\mathbf{v}$ and normal to $\mathbf{v}$, is generic. This
means that $\mbox{dim}(H_\mathbf{v}\cap X)=\mbox{dim}(X)-1$ for
all $X\in L_\mathcal{A}$. In Theorem $4.2$ of \cite{BW}, it is
proven that the collection of cycles $g_R$ corresponding to
regions $R$ such that $R\cap H$ is nonempty and bounded, form a
basis of $\tilde{H}_{d-2}(\bar{L}_\mathcal{A})$ where $H$ is an
affine hyperplane, generic with respect to $\mathcal{A}$. Lemma
$4.3$ of \cite{BW} states that for each region $R$, the affine
slice $R\cap H_{\mathbf{v}}$ is nonempty and bounded if and only
if $\mathbf{v}\cdot\mathbf{x}>0$ for all $\mathbf{x}\in R$. At
this point, we refer the reader to figure \ref{fig} which
illustrates this basis for $W=C_3$. The figure shows the
stereographic projection of the open hemisphere satisfying
$\mathbf{v}\cdot\mathbf{x}>0$ and is combinatorially equivalent to
the projection onto $H_{\mathbf{v}}$. Each region in the figure
which is non-empty and bounded contributes a generator to the
basis for the homology of the intersection lattice.\\

The fact that the hyperplane $H_\mathbf{v}$ is generic is
equivalent to the fact that $0\notin H_\mathbf{v}$ and $H\cap X
\neq \emptyset$ for all $1$-dimensional subspaces $X\in
L_\mathcal{A}$ (Section $4$ of \cite{BW}). We will refer to a
non-zero vector in a one dimensional subspace $X\in L_\mathcal{A}$
as a ray. It is therefore sufficient to check that $H_\mathbf{v}$
is generic with respect to the set of rays. In section
\ref{vector}, we describe for any $W$, the general construction of
a vector $\mathbf{v}$ with $H_\mathbf{v}$ generic. In section
\ref{special}, we use the construction of $\mathbf{v}$ to
explicitly embed the homology of the non-crossing partition
lattice in the homology of the intersection lattice.

\subsection{Construction of a generic vector for general finite reflection groups}
\label{vector} Let $\{\tau_1,\dots,\tau_n\}$ be an arbitrary set
of linearly independent roots. Since the number of roots is finite
and rays occur at the intersection of hyperplanes, it follows that
the number of unit rays is finite. Hence, the set
$\{\mathbf{r}\cdot \rho \mid \mathbf{r} \mbox{ a unit ray, } \rho
\mbox{ a root}
 \}$ is finite and
\[\lambda=\mbox{min}\{|\mathbf{r}\cdot\rho| :  \mathbf{r} \mbox{ a unit ray, }
\rho \mbox{ a root and } \mathbf{r}\cdot\rho\neq 0 \}\] is a well
defined, positive, real number.  It will be convenient to use the
auxiliary quantity $a=1+1/\lambda$.

\begin{proposition}\label{vprop}
Let $\mathbf{v}=\tau_1+a\tau_2+a^2\tau_3+\dots+a^{n-1}\tau_n$ and
$\mathbf{r}$ be a unit length ray. Then
$|\mathbf{r}\cdot\mathbf{v}|\geq \lambda$. In particular,
$H_\mathbf{v}$ is generic.
\end{proposition}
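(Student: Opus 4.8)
The plan is to expand $\mathbf{r}\cdot\mathbf{v}$ against the geometric‑series shape of $\mathbf{v}$, peel off the highest surviving term, and absorb the remaining terms by a telescoping estimate in which the choice $a=1+1/\lambda$ is exactly calibrated to the fact that all roots are unit vectors in this setting (the $\alpha_i$ are unit normals and every root is a $W$‑image of some $\alpha_i$). Concretely, set $c_i=\mathbf{r}\cdot\tau_i$, so that $\mathbf{r}\cdot\mathbf{v}=\sum_{i=1}^{n}a^{i-1}c_i$.

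For each $i$, the number $c_i=\mathbf{r}\cdot\tau_i$ belongs to the finite set used to define $\lambda$, since $\mathbf{r}$ is a unit ray and $\tau_i$ is a root; hence either $c_i=0$ or $|c_i|\ge\lambda$, and in every case $|c_i|\le|\mathbf{r}||\tau_i|=1$. Because the $\tau_i$ are linearly independent and $\mathbf{r}\neq 0$, not all of the $c_i$ vanish; let $k$ be the largest index with $c_k\neq 0$. Then by the triangle inequality
\[
|\mathbf{r}\cdot\mathbf{v}|\ \ge\ a^{k-1}|c_k|-\sum_{i=1}^{k-1}a^{i-1}|c_i|\ \ge\ a^{k-1}\lambda-\sum_{i=1}^{k-1}a^{i-1}.
\]
Since $a-1=1/\lambda$, the geometric sum equals $\sum_{i=1}^{k-1}a^{i-1}=(a^{k-1}-1)/(a-1)=\lambda(a^{k-1}-1)$ (an empty sum when $k=1$), and substituting yields $|\mathbf{r}\cdot\mathbf{v}|\ge a^{k-1}\lambda-\lambda(a^{k-1}-1)=\lambda$, as claimed.

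To deduce genericity I would apply the criterion recalled just before the proposition: $H_{\mathbf{v}}$ is generic exactly when $0\notin H_{\mathbf{v}}$ and $H_{\mathbf{v}}\cap X\neq\emptyset$ for every one‑dimensional $X\in L_{\mathcal A}$. Since $H_{\mathbf{v}}=\{\mathbf{x}:\mathbf{v}\cdot\mathbf{x}=|\mathbf{v}|^2\}$ and $\mathbf{v}$ is a positive combination of linearly independent vectors, $|\mathbf{v}|^2>0$, so $0\notin H_{\mathbf{v}}$. Given a one‑dimensional $X\in L_{\mathcal A}$, choose a unit ray $\mathbf{r}\in X$; the bound just proved gives $\mathbf{r}\cdot\mathbf{v}\neq 0$, so $\bigl(|\mathbf{v}|^2/(\mathbf{r}\cdot\mathbf{v})\bigr)\mathbf{r}\in X\cap H_{\mathbf{v}}$. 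Hence $H_{\mathbf{v}}$ is generic.

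The only genuinely delicate point is the uniform upper bound $|c_i|\le 1$: the argument succeeds precisely because roots are unit vectors, so that the error sum $\sum_{i<k}a^{i-1}|c_i|$ is dominated by $\sum_{i<k}a^{i-1}=\lambda(a^{k-1}-1)=a^{k-1}\lambda-\lambda$, which is exactly what is needed to leave $\lambda$ after cancellation against the leading term. Everything else is the triangle inequality and a finite geometric series, so I do not anticipate any further obstacle beyond tracking this normalization and the existence of the top index $k$.
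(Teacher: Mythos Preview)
Your argument is correct and is essentially the same as the paper's: both isolate the largest index $k$ with $\mathbf{r}\cdot\tau_k\neq 0$, use $|\mathbf{r}\cdot\tau_i|\le 1$ on the lower terms and $|\mathbf{r}\cdot\tau_k|\ge\lambda$ on the top one, and finish with the geometric series identity coming from $a-1=1/\lambda$. The only cosmetic difference is that the paper normalizes the sign of $\mathbf{r}$ and bounds $\mathbf{r}\cdot\mathbf{v}$ from below directly, whereas you phrase it via the reverse triangle inequality; your explicit verification of genericity is a nice addition the paper leaves implicit.
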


\begin{proof}
Let $\mathbf{r}$ denote a unit length ray. Since
$\{\tau_1,\dots,\tau_n\}$ is a linearly independent set,
$\mathbf{r}\cdot\tau_k\neq 0$ for some $\tau_k$. Let $k$ be the
index with $1\leq k\leq n$ satisfying
\[\mathbf{r}\cdot\tau_k\neq 0, \mbox{ and } \mathbf{r}\cdot\tau_{k+1}= 0,\dots, \mathbf{r}\cdot\tau_n=0.\]
By replacing $\mathbf{r}$ by $-\mathbf{r}$ if necessary, we can
assume that $\mathbf{r}\cdot\tau_k>0$ and hence
$\mathbf{r}\cdot\tau_k\geq \lambda$ by the definition of
$\lambda$. We now compute $\mathbf{r}\cdot\mathbf{v}$.
\begin{eqnarray*}
  \mathbf{r}\cdot\mathbf{v} &=& \mathbf{r}\cdot(\tau_1+a\tau_2+a^2\tau_3+\dots+a^{n-1}\tau_n) \\
   &=& \mathbf{r}\cdot\tau_1+a(\mathbf{r}\cdot\tau_2)+a^2(\mathbf{r}\cdot\tau_3)+\dots+a^{n-1}(\mathbf{r}\cdot\tau_n) \\
   &=& \mathbf{r}\cdot\tau_1+a(\mathbf{r}\cdot\tau_2)+a^2(\mathbf{r}\cdot\tau_3)+\dots+a^{k-1}(\mathbf{r}\cdot\tau_k)+0\\
   &\geq& -1+a(-1)+a^2(-1)+\dots +a^{k-2}(-1)+a^{k-1}(\lambda) \\
   &=& -1(1+a+a^2+\dots +a^{k-2})+a^{k-1}(\lambda) \\
&=&\lambda.
\end{eqnarray*}
The last equality follows from the formula for the sum of a
geometric series and the fact that $\lambda=1/(a-1)$.
\end{proof}

\subsection{Specialising the generic hyperplane}
\label{special} In order to relate the homology basis for
non-crossing partition lattices to the homology basis for the
corresponding intersection
lattice, we apply the operator \\
$\mu=2(I-c)^{-1}$ from \cite{Lattices} to $X(c)$ to obtain a
complex which we will call $\mu(X(c))$ and which is the positive
part of the complex $\mu(AX(c))$ studied in \cite{PtoA}. The
complex $\mu(X(c))$ has vertices $\mu(\rho_1),\dots,
\mu(\rho_{nh/2})$ and a simplex on $\mu(\rho_{i_1}), \dots,
\mu(\rho_{i_k})$ if \[\rho_1\leq \rho_{i_1}<\dots<\rho_{i_k}\leq
\rho_{nh/2} \mbox{ and } \ell(r(\rho_{i_1})\dots
r(\rho_{i_k})c)=n-k.\] The walls of the facets of $\mu(AX(c))$ are
hyperplanes. Since regions considered in \cite{BW} are bounded by
reflection hyperplanes, this provides the connection between the
two and explains why we use $\mu(X(c))$ instead of
$X(c)$.\\

We now apply Proposition \ref{vprop} to the case where
$\tau_1,\dots,\tau_n$ are the last $n$ positive roots. Thus we set
$\tau_i=\rho_{nh/2-n+i}$. Since $\{\tau_1,\dots,\tau_n\}$ is a set
of consecutive roots and $r(\tau_n)\dots r(\tau_1)=c$, the set
$\{\tau_1,\dots,\tau_n\}$ is linearly independent by note $3.1$ of
\cite{Lattices}.

\begin{proposition}\label{prop}
For $\tau_i=\rho_{nh/2-n+i}$ and
\[\mathbf{v}=\tau_1+a\tau_2+a^2\tau_3+\dots+a^{n-1}\tau_n,\]
$\mu(\rho_i) \cdot \mathbf{v}
>0$ for all $1\leq i\leq nh/2$.
\end{proposition}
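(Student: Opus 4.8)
The plan is to derive Proposition~\ref{prop} from Proposition~\ref{vprop} applied to the vectors $\mu(\rho_i)$, and then to settle a sign. \emph{First} I would check that each $\mu(\rho_i)$ spans a ray of the reflection arrangement. The operator $\mu=2(I-c)^{-1}$ is invertible since $c$ fixes no nonzero vector, and each vertex $\rho_i$ of $X(c)$ lies in a facet $\sigma$ (the complex is pure of dimension $n-1$); the $n$ roots of $\sigma$ are linearly independent by note~$3.1$ of \cite{Lattices}, so $\mu(\sigma)$ is a nondegenerate $(n-1)$-simplex whose $n$ walls lie on reflection hyperplanes, as recalled above from \cite{PtoA}. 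The vertex $\mu(\rho_i)$ lies on the $n-1$ walls of $\mu(\sigma)$ not opposite to it, hence on the one-dimensional intersection of those $n-1$ reflection hyperplanes, so it spans a ray. Taking $\mathbf{r}=\mu(\rho_i)/\|\mu(\rho_i)\|$ in Proposition~\ref{vprop} gives $|\mu(\rho_i)\cdot\mathbf{v}|\geq\lambda\,\|\mu(\rho_i)\|>0$, so the inner product is nonzero and only its sign is in doubt.

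\emph{Second}, I would pin the sign down using the computation inside the proof of Proposition~\ref{vprop}. Let $k=k(i)$ be the largest index in $\{1,\dots,n\}$ with $\mu(\rho_i)\cdot\tau_k\neq 0$ (it exists because $\tau_1,\dots,\tau_n$ span $\mathbf{R}^n$). That computation shows that when the last nonzero coefficient is positive no sign change of $\mathbf{r}$ is needed and $\mathbf{r}\cdot\mathbf{v}\geq\lambda>0$; so in general $\mu(\rho_i)\cdot\mathbf{v}$ has the same sign as $\mu(\rho_i)\cdot\tau_{k(i)}$. It therefore suffices to prove $\mu(\rho_i)\cdot\tau_{k(i)}>0$ for every $i$.

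\emph{Third}, the heart of the argument: I would use that $\tau_m=\rho_{nh/2-n+m}$ and write $N=nh/2-n$. Two ingredients are needed. (a) $\mu(\rho)\cdot\rho=\|\rho\|^2>0$ for any root $\rho$: from $(I-c)\mu=2I$ and $c^{\mathsf{T}}=c^{-1}$ one gets $(I-c)^{-1}+(I-c^{-1})^{-1}=I$, so the form $B(x,y)=x\cdot\mu(y)$ satisfies $B(x,y)+B(y,x)=2\,x\cdot y$, whence $B(x,x)=\|x\|^2$. (b) $\mu(\rho_a)\cdot\rho_b\geq 0$ whenever $1\leq a\leq b\leq nh/2$; this encodes the combinatorics of the total order on the roots, and I would extract it from the analysis in \cite{Lattices} (equivalently, from the characterisation of the edges of $X(c)$ together with the fact that $\mu$ carries each facet of $X(c)$ onto a chamber of the arrangement). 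Given (a), $\mu(\rho_{N+m})\cdot\tau_m=\|\rho_{N+m}\|^2\neq 0$, so if $i=N+m$ then $k(i)\geq m$, while if $i\leq N$ then $k(i)\geq 1$ and $i\leq N<N+k(i)$; in all cases $i\leq N+k(i)\leq nh/2$. Applying (b) with $a=i$, $b=N+k(i)$ gives $\mu(\rho_i)\cdot\tau_{k(i)}=\mu(\rho_i)\cdot\rho_{N+k(i)}\geq 0$, and since this number is nonzero by the choice of $k(i)$ it is positive. Together with the second step this gives $\mu(\rho_i)\cdot\mathbf{v}>0$ for all $i$.

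I expect ingredient (b) to be the only real difficulty; everything else is bookkeeping around the dominant-term estimate already in the proof of Proposition~\ref{vprop}. It is exactly here that it matters that $\tau_1,\dots,\tau_n$ are the \emph{last} $n$ positive roots: this is precisely what forces $i\leq N+k(i)$ for every positive root $\rho_i$, so that (b) can be applied in every case.
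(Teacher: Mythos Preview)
Your argument is correct, but it takes a longer route than the paper's. The paper does not go through Proposition~\ref{vprop} at all: instead it quotes directly from Proposition~4.6 of \cite{Lattices} the two facts
\[
\mu(\rho_i)\cdot\rho_j\geq 0\ \text{for }1\leq i\leq j\leq nh/2,
\qquad
\mu(\rho_{i+t})\cdot\rho_i=0\ \text{for }1\leq t\leq n-1,
\]
and observes that, because $\tau_1,\dots,\tau_n$ are the \emph{last} $n$ positive roots, these two facts together give $\mu(\rho_i)\cdot\tau_j\geq 0$ for \emph{every} $j$ (the first covers $i\leq N+j$, the second covers $N+j<i\leq N+n$). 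Linear independence of $\{\tau_j\}$ then forces at least one of these numbers to be strictly positive, and since every coefficient $a^{j-1}$ is positive the whole sum $\mu(\rho_i)\cdot\mathbf{v}$ is positive. So the paper's proof is a three-line positivity argument with no genericity, no ray verification, and no dominant-term estimate. Your ingredient~(b) is exactly the first of the two cited facts; had you also invoked the second, your step~1, your appeal to Proposition~\ref{vprop}, and your ingredient~(a) would all become unnecessary. What your route does buy is that it reuses the machinery of Proposition~\ref{vprop} and only needs the nonnegativity inequality at the single index $k(i)$, whereas the paper needs it (together with the orthogonality relation) at all indices simultaneously.
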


\begin{proof}
Recall from proposition $4.6$ of \cite{Lattices} that the
following properties hold.
\[\mu(\rho_i)\cdot\rho_j \geq 0 \mbox{ for } 1\leq i\leq
        j\leq nh/2.\]
\[\mu(\rho_{i+t})\cdot\rho_i=0 \mbox{ for } 1\leq t\leq n-1
        \mbox{ and for all } i.\]

Since $\tau_1,\dots,\tau_n$ are the last $n$ positive roots, it
follows that $\mu(\rho_i)\cdot \tau_j\geq 0$. Furthermore for each
$\rho_i$, there is at least one $\tau_j$ with
$\mu(\rho_i)\cdot\tau_j>0$ by linear independence of
$\{\tau_1,\dots,\tau_n\}$. Since all the coefficients of
$\mathbf{v}$ are strictly positive,
$\mu(\rho_i)\cdot\mathbf{v}>0$.
\end{proof}

\begin{proposition}
The projection of $\mu(X(c))$ onto the affine hyperplane
$H_\mathbf{v}$ where $\mathbf{v}$ is as in proposition 4.2 induces
an embedding of the homology of the non-crossing partition lattice
into the homology of the corresponding intersection lattice.
\end{proposition}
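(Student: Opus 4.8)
The plan is to combine the homotopy equivalence of Section~\ref{homotopy} with the Bj\"{o}rner--Wachs basis from Section~\ref{homology}, using Proposition~\ref{prop} to verify the genericity and sign conditions that make the two constructions compatible. Concretely: by Theorem~\ref{thrm}, $|\overline{NCP_c}|$ is homotopy equivalent to the $(n-2)$-skeleton of $X(c)$, hence (via $\mu$, which is a linear isomorphism) to the $(n-2)$-skeleton of $\mu(X(c))$; so $\tilde H_{n-2}(\overline{NCP_c})$ is free with one generator per facet of $\mu(X(c))$. On the other side, $\tilde H_{n-2}(\bar L_\mathcal{A})$ has the Bj\"{o}rner--Wachs basis $\{g_R\}$ indexed by regions $R$ of the reflection arrangement with $R\cap H_\mathbf{v}$ nonempty and bounded, equivalently (Lemma~4.3 of~\cite{BW}) with $\mathbf{v}\cdot\mathbf{x}>0$ for all $\mathbf{x}\in R$. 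The map is then: send the generator attached to a facet of $\mu(X(c))$ to the cycle $g_R$ of the region $R$ containing that facet, and extend linearly.

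\textbf{Key steps.} First I would show each facet of $\mu(X(c))$ lies in a single region of $\mathcal{A}$ with $\mathbf{v}$ positive on it. A facet of $\mu(X(c))$ is spanned by $\mu(\rho_{i_1}),\dots,\mu(\rho_{i_n})$ with $\ell(r(\rho_{i_1})\cdots r(\rho_{i_n})c)=0$; its relative interior is cut out by reflection hyperplanes (the walls are hyperplanes, as noted before Proposition~\ref{prop}), so it sits inside one region $R$. By Proposition~\ref{prop}, $\mu(\rho_i)\cdot\mathbf{v}>0$ for every vertex, hence $\mathbf{v}\cdot\mathbf{x}>0$ on the whole simplex by convexity, so $\mathbf{v}\cdot\mathbf{x}>0$ on all of $R$; thus $R\cap H_\mathbf{v}$ is nonempty and bounded and $g_R$ is a genuine basis element. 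Second, I would check the assignment facet~$\mapsto R$ is injective: distinct facets of $\mu(X(c))$ have disjoint relative interiors, and since $\mu(X(c))$ is (the positive part of) a complex whose top cells are separated by reflection hyperplanes, two distinct facets cannot lie in the same region. Third, I would verify that projecting $\mu(X(c))$ radially onto $H_\mathbf{v}$ realizes this assignment geometrically: the image of a facet is a genuine $(n-1)$-cell of the arrangement-induced subdivision of $H_\mathbf{v}$ sitting in the bounded piece $R\cap H_\mathbf{v}$, and the boundary sphere coming from the homotopy equivalence $\hat f$ maps to the boundary of that bounded cell, which is precisely the cycle defining $g_R$. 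Putting these together, the induced map on $\tilde H_{n-2}$ carries a basis (facet generators) to a linearly independent subset (a subfamily of the $g_R$), hence is an injection.

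\textbf{Main obstacle.} The delicate point is the third step: identifying the image of the spherical cycle $g_\sigma\in\tilde H_{n-2}(|\overline{NCP_c}|)$ attached to a facet $\sigma$ of $X(c)$ with the Bj\"{o}rner--Wachs cycle $g_R$ under the composite of $|\hat f|$, the isomorphism induced by $\mu$, and radial projection to $H_\mathbf{v}$. One must check that the $(n-2)$-skeleton of $\mu(X(c))$, projected to $H_\mathbf{v}$, is carried into $\bar L_\mathcal{A}$ compatibly with the order complex structure (so that the relevant chain groups and boundary maps match up), and that the deformation retraction of $Y$ onto $|\hat P(X(c))|$ used in Corollary~4.3 is compatible with the deformation retraction of the complement of the bounded cells that underlies the $\{g_R\}$ basis. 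I would handle this by working in the barycentric subdivision throughout: $|P(\mu(X(c)))|=sd(\mu(X(c)))$ projects simplex-by-simplex into $sd$ of the arrangement cell structure on $H_\mathbf{v}$, and on that common subdivision both cycles are represented by the sum of top simplices of the corresponding bounded cell, so they agree up to sign. Once the cycle-level identification is in place, injectivity on homology is immediate since the $g_R$ are part of a basis.
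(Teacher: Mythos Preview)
Your overall strategy---identify $\tilde H_{n-2}(\overline{NCP_c})$ with the facets of $\mu(X(c))$ via Section~\ref{homotopy}, then push forward to the Bj\"{o}rner--Wachs basis by projecting to $H_\mathbf{v}$---is the same as the paper's. But there is a genuine gap in your first ``key step'': you assert that each facet of $\mu(X(c))$ sits inside a \emph{single} region $R$ of the reflection arrangement, reasoning that its walls lie in reflection hyperplanes. That inference is invalid. Knowing that the boundary walls of a facet are pieces of reflection hyperplanes does not prevent \emph{other} reflection hyperplanes from slicing through the facet's interior; in general a facet of $\mu(X(c))$ is a union of several Coxeter chambers. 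The paper's own $C_3$ example exhibits this explicitly: the facet on $\mu(\rho_2),\mu(\rho_4),\mu(\rho_8)$ is the union of two chambers, so its image under the homology map is the \emph{sum} of two basis elements $g_R$, not a single one.

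Consequently your proposed assignment ``facet $\mapsto g_R$'' is not well-defined, and your injectivity argument (distinct facets land in distinct regions, hence map to distinct basis vectors) collapses with it. The correct map, which is what the paper uses, is $p_*(g'_F)=\sum_R b_R\, g_R$ with $b_R=1$ exactly when $R\cap H_\mathbf{v}\subseteq p(F)$ and $b_R=0$ otherwise. Injectivity then follows from the fact that distinct facets of $\mu(X(c))$ project to sets with disjoint interiors, so each region $R$ lies in the projection of at most one facet $F$; hence in $p_*\bigl(\sum_F a_F g'_F\bigr)=\sum_R c_R g_R$ every nonzero $c_R$ equals $a_F$ for that unique $F$, and vanishing of all $c_R$ forces all $a_F=0$. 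Your ``main obstacle'' paragraph about barycentric subdivisions is therefore beside the point: the real issue is not matching subdivisions or cycle representatives, but correctly handling the one-to-many correspondence between $\mu(X(c))$-facets and Coxeter chambers.
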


\begin{proof}
Recall from section \ref{homotopy} that homology generators for
the non-crossing partition lattice are identified with the
boundaries of facets of $X(c)$ and hence with facets of
$\mu(X(c))$. On the other hand, we can use the generic vector
$\mathbf{v}$ to identify homology generators of the intersection
lattice with cycles $g_R$ corresponding to regions $R$ such that
$R\cap H$ is nonempty and bounded. From \cite{PtoA}, the boundary
of each facet of $\mu(X(c))$ is a union of pieces of reflection
hyperplanes. It follows that vertices $\mu(\rho_i)$ for $1\leq
i\leq nh/2$ are rays and each facet of $\mu(X(c))$ projects to a
union of affine slices of the form $R\cap H$. Furthermore, the
projection of distinct $\mu(X(c))$ facets have disjoint
interiors. \\

We denote the projection map by $p:\mu(X(c))\rightarrow H$ and by
$p_*$ the induced map from the homology of the non-crossing
partition lattice to the homology of the intersection lattice.
Then $p_*$ takes the homology generator $g'_F$ corresponding to a
facet $F$ of $\mu(X(c))$ to the sum of the intersection lattice
homology generators $g_R$ corresponding to the affine slices
$R\cap H$ contained in $p(F)$. That is $p_*(g'_F)=\Sigma b_Rg_R$
where $b_R=1$ if $R\cap H$ is contained
in $p(F)$ and $0$ otherwise.\\

To establish injectivity of $p_*$, we observe that $p_*(\Sigma
a_Fg'_F)=\Sigma c_Rg_R$ where $c_R=0$ if $R$ is not contained in
$p(\mu(X(c)))$ and $c_R=a_F$ if $F$ is the unique facet satisfying
$R\subseteq p(F)$. Thus $\Sigma a_Fg'_F$ is an element of
$\mbox{Ker}(p_*)$ if and only if $a_F=0$ for all $F$.
\end{proof}

\begin{example}
For $W=C_3$ and for appropriate choices of fundamental domain and
simple system, the relevant regions are shown in figure \ref{fig}
where $i$ represents $\mu(\rho_i)$.\\

\begin{figure}[htp]
\centering
\includegraphics[height=0.4\textwidth]{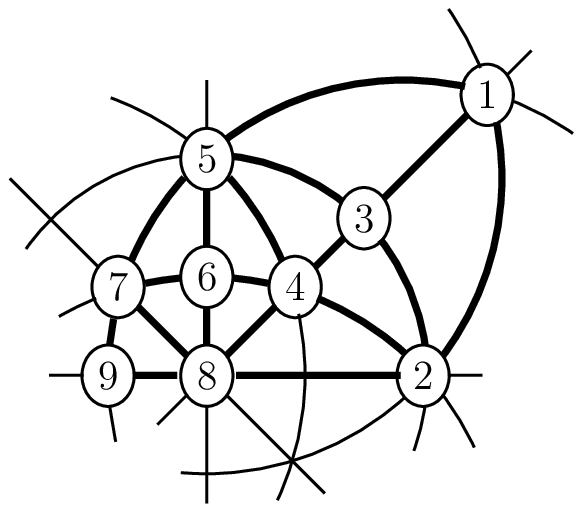}
\caption{}\label{fig}
\end{figure}

The basis for homology of the intersection lattice is formed by
cycles corresponding to regions in figure \ref{fig} which are
non-empty and bounded. For this
example, there are $15$ such regions. \\

Homology generators for the non-crossing partition lattice are
identified with the boundaries of facets of $\mu(X(c))$, of which
there are $10$ in this example. These facets are outlined in bold.
Note that the facet with corners $\mu(\rho_2), \mu(\rho_4),
\mu(\rho_8)$ is a union of two facets of the Coxeter complex and
therefore the embedding maps the homology element associated to
this facet to the sum of the two corresponding generators in the
homology of the intersection lattice.
\end{example}


\end{document}